\definecolor{NoteColor}{rgb}{1,0,0}
\renewcommand{\textsc}{\textcolor{red}}
\newtheorem{theorem}{\rm\bf Theorem}[section]
\newtheorem{proposition}[theorem]{\rm\bf Proposition}
\newtheorem{lemma}[theorem]{\rm\bf Lemma}
\newtheorem*{theorem 1}{\rm\bf Proposition 1}
\newtheorem*{theorem 2}{\rm\bf Proposition 2}
\theoremstyle{definition}
\newtheorem{definition}[theorem]{\rm\bf Definition}
\theoremstyle{remark}
\newtheorem{remark}[theorem]{\rm\bf Remark}
\newtheorem{remarks}[theorem]{\rm\bf Remarks}
\newtheorem{example}[theorem]{\rm\bf Example}
\newtheorem{question}[theorem]{\rm\bf Question}
\def\interieur#1{\mathord{\mathop{\kern 0pt #1}\limits^\circ}}
\title[Teichm\"uller spaces]{On various Teichm\"uller spaces of a surface of infinite topological type}
\author{D. Alessandrini}
\address{Daniele Alessandrini,  Institut de Recherche Math{\'e}matique Avanc\'ee,
Universit{\'e} de Strasbourg and CNRS,
7 rue Ren\'e Descartes,
 67084 Strasbourg Cedex, France}
\email{alessand@math.u-strasbg.fr}
\author{L. Liu}
\address{Lixin Liu, Department of Mathematics, Sun Yat-sen (Zhongshan) University, 510275, Guangzhou, P. R. China}
\email{mcsllx@mail.sysu.edu.cn}
\author{A. Papadopoulos}
\address{Athanase Papadopoulos, Institut de Recherche Math{\'e}matique Avanc\'ee,
Universit{\'e} de Strasbourg and CNRS,
7 rue Ren\'e Descartes,
 67084 Strasbourg Cedex, France} \email{athanase.papadopoulos@math.unistra.fr}
\date{\today}
\author{W. Su}
\address{Weixu Su, Department of Mathematics, Sun Yat-sen(Zhongshan) University, 510275, Guangzhou, P. R. China}
\email{su023411040@163.com}
\begin{document}

\begin{abstract} We show that the length spectrum metric on Teichm\"uller spaces of surfaces of infinite topological type is complete. We also give related results and examples that compare the length spectrum Teichm\'uller space with quasiconformal and the Fenchel-Nielsen Teichm\"uller spaces on such surfaces. 
\end{abstract} 

\maketitle

%\bigskip

\noindent AMS Mathematics Subject Classification:   32G15 ; 30F30 ; 30F60.
\medskip

\noindent Keywords:  Surfaces of infinite topological type,  Teichm\"uller space, Teichm\"uller metric, quasiconformal metric,  length spectrum metric,  Fenchel-Nielsen coordinates, Fenchel-Nielsen metric.
\medskip

\noindent L. Liu and W. Su are partially supported by NSFC grants 10871211 and 11011130207.

\tableofcontents
  
\section{Introduction}\label{intro}

In this paper, we investigate various Teichm\"uller spaces associated to a surface of  infinite topological type, in continuation of works that were done in  \cite{LP} and \cite{ALPSS}. An initial impulse to these works was given by a paper by H. Shiga  \cite{Shiga}.

Let $S$ be an orientable connected surface of infinite topological type. More precisely, we assume that $S$ is obtained by gluing along their boundary components a countably infinite number of generalized pairs of pants. Here, a generalized pair of pants is a sphere with three holes, a hole being either a point removed (leaving a puncture of the pair of pants) or an open disk removed (leaving a boundary component of the pair of pants).

We study Teichm\"uller spaces of $S$. We recall that unlike in the case of surfaces of  finite type, there are several Teichm\"uller spaces associated to the surface $S$,  each of of these Teichm\"uller spaces heavily depending (even set-theoretically) on the choice of a basepoint for that space. Furthermore, even if we fix a basepoint, the Teichm\"uller space depends (again, set-theoretically) on a distance function that we put on that space. There are various distances that one can use here. For instance, such a distance between two hyperbolic structures can measure suprema of ratios of geodesic lengths of simple closed curves with respect to these two structures, or best quasiconformal homeomorphism constants between them, or best bi-Lipschitz homeomorphism constants, or it can be equal to a sup norm associated to Fenchel-Nielsen coordinates, and so on. We refer to the papers \cite{LP} and \cite{ALPSS} for a discussion of such  ideas.
The Teichm\"uller spaces we obtain have consequently different names, and
in this paper, we shall deal with the so-called ``length-spectrum Teichm\"uller space",    ``quasiconformal Teichm\"uller space" and ``Fenchel-Nielsen Teichm\"uller space".

For the purpose of stating the results, let us briefly review the definitions.

The \emph{length-spectrum Teichm\"uller space},  $\mathcal{T}_{ls}(H_0)$, with basepoint (the homotopy class of) a hyperbolic structure $H_0$ on $S$,
is the space of homotopy classes of hyperbolic structures $H$ on $S$ such that the ratios of lengths of simple closed geodesics measured in the metric $H_0$ and $H$ are uniformly bounded (see more precisely Definition \ref{def:ls} below). This space is equipped with a natural distance $d_{ls}$ called the \emph{length-spectrum distance}, obtained by  taking the logarithm of the supremum of ratios of geodesic  lengths between (homotopy classes of) hyperbolic structures; see Formula (\ref{eq:d-ls}) below.

The \emph{quasiconformal Teichm\"uller space} with basepoint (the homotopy class of) a conformal structure $H_0$ on $S$,  $\mathcal{T}_{qc}(H_0)$,  is the space of homotopy classes of conformal structures $H$ on $S$ such that there exists a quasiconformal mapping homotopic to the identity between the structures $H_0$ and $H$ (see more precisely Definition \ref{def:qc} below). We denote this space by $\mathcal{T}_{qc}(H_0)$. This space is equipped with a natural distance $d_{qc}$, the \emph{quasiconformal} or \emph{Teichm\"uller distance}, given by the logarithm of the dilatation of the best quasiconformal map homotopic to the identity between the two structures; see Formula (\ref{eq:qc}) below.

A simple closed curves on $S$ is said to be essential if it is not homotopic to a point or to a puncture (but it can be homotopic to a boundary component). 
We let $\mathcal{S}=\mathcal{S}(S)$ be set of isotopy classes of essential simple closed curves on $S$. 
Given an element $\alpha$ of $\mathcal{S}$ and a (homotopy class of) hyperbolic structure $H$ on $S$, we denote by $l_H(\alpha)$ the length of the unique closed $H$-geodesic in the class $\alpha$. 

By an abuse of language, we shall often identify a hyperbolic metric (or conformal structure) on $S$ with the homotopy class of that metric (or conformal structure) as an element of Teichm\"uller space.

A basic result that is used in comparing the two Teichm\"uller spaces $(\mathcal{T}_{ls}, d_{ls})$ and  $(\mathcal{T}_{qc}, d_{qc })$ is a result of Wolpert saying that if $H$ and $H'$ are two hyperbolic structures on the surface $S$, then, for any $K$-quasiconformal map $f:(S,H)\to (S,H')$ and for any element $\alpha$ in $\mathcal{S}(S)$, we have the following inequality:
\begin{equation}\label{eq:Wolpert}
\frac{1}{K}\leq\frac{l_{H'}(f(\alpha))}{ l_{H}(\alpha)}\leq K.
\end{equation} 
For a proof, see \cite{Abikoff}. We refer to this inequality as \emph{Wolpert's inequality}.

From this inequality, we obtain a natural inclusion map
\begin{equation}\label{eq:inclusion2}
\mathcal{T}_{qc}(H_0)\hookrightarrow \mathcal{T}_{ls}(H_0).
\end{equation}

    In general, this inclusion map is not surjective (see \cite{LP} for an example), bur it is continuous (and Lipschitz), since Wolpert's inequality also implies that for any two elements $H$ and $H'$ in $\mathcal{T}_{qc}(S_0)$, we have
\begin{equation}\label{eq:lsqc}
d_{ls}(H,H')\leq d_{qc}(H,H').
\end{equation}
 
 Given an element $\gamma$ in $\mathcal{S}$ and a hyperbolic metric $H$ on $S$, we denote by $l_H(\gamma)$ the length of the unique geodesic in the class $\gamma$, for the structure $H$.

We shall also use Fenchel-Nielsen coordinates for hyperbolic structures. The coordinates are defined relative to a pair of pants decomposition. The notion of hyperbolic pair of pants decomposition of $S$ has to be used with some special care, one reason being that unlike the case of surfaces of finite type, if we are given a topological pair of pants decomposition $\mathcal{P}=\{C_i\}_{i \in I}$ of $S$ and a hyperbolic structure $H_0$ on $S$, and if we replace each simple closed curve $C_i$ by the $H_0$-geodesic in its homotopy class, then some of these closed geodesics might accumulate on a geodesic of infinite length, and then the union of the closed geodesics might not be a geodesic pair of pants decomposition. Such a phenomenon can be seen in examples of Basmajian, in his paper \cite{Basmajian}. In the paper \cite{ALPSS}, we gave a necessary and sufficient condition (which we called \emph{Nielsen-convexity}) under which given a hyperbolic structure on a surface of infinite type, a topological pair of pants decomposition (or, equivalently, \emph{any} topological pair of pants decomposition) can be made geodesic. Consequently, when we shall talk about Fenchel-Nielsen coordinates for a hyperbolic surface, we shall tacitly assume that the underlying hyperbolic structure is Nielsen-convex.

In the paper \cite{ALPSS}, we also introduced the notion of a \emph{Fenchel-Nielsen Teichm\"uller space}, $\mathcal{T}_{FN}(H_0)$, based at a hyperbolic surface $H_0$, with its associated \emph{Fenchel-Nielsen metric} $d_{FN}$, relative to a fixed geodesic pants decomposition $\mathcal{P}$ of $H_0$.

          Given a pair of pants decomposition $\mathcal{P}=\{C_i\}_{i=\in I}$ (where $I$ is an infinite countable set) of the surface $S$, the following condition was formulated by Shiga in his paper \cite{Shiga}:
          
          \begin{equation}
  \label{LM} \exists M>0,\forall i\in I, \frac{1}{M}\leq l_H(C_i)\leq M.
 \end{equation}
              
We shall say that a hyperbolic structure $H$ satisfying (\ref{LM}) satisfies \emph{Shiga's condition} with respect to the pair of pants decomposition $\mathcal{P}=\{C_i\}$  $i\in I$.

In \cite{LP} (Theorem 4.14), we proved that if the base hyperbolic metric $H_0$ satisfies Shiga's Condition,
    then we have 
     $\mathcal{T}_{qc}(H_0)=\mathcal{T}_{ls}(H_0)$ (set-theoretically).

If the base topological surface were of finite type, then it is known that the length-spectrum and the quasiconformal Teichm\"uller spaces coincide setwise, and that the topologies defined on that set by the length-spectrum metric and the quasiconformal metrics are the same. This is because the Teichm\"uller space topology is induced from the embedding of that space in the space $\mathbb{R}_+^{\mathcal{S}}$ of positive functions on $\mathcal{S}$, equipped with the weak topology via the length functions.
The fact that the topology induced by the length-spectrum metric coincides with this topology follows from the fact that the geodesic length functions of some finite number of elements of $\mathcal{S}$ are sufficient to parametrize Teichm\"uller space and to define its topology, see \cite{FLP}. See also \cite{Li} and \cite{Liu}.

The case of surfaces of infinite type is different. The first negative result in this direction is a result by Shiga, who proved in \cite{Shiga} (Theorem 1.1) that
there exists a hyperbolic structure $H_0$ on a surface of infinite type and a
sequence $(H_n)$, $n\geq 1$ of hyperbolic structures in 
 $\mathcal{T}_{ls}(H_0)$ which (when they are regarded as conformal structures) are at the same time are in $\mathcal{T}_{qc}(H_0)$ that satisfy
$$d_{ls}(H_n,H_0)\to 0, \ \mathrm{while} \ d_{qc}(H_n,H_0)\to \infty.$$
This shows that  $d_{ls}$ does not induce the same topology as that of $d_{qc}$ on
$\mathcal{T}_{qc}(H_0)$.

In the same paper, Shiga showed that if the hyperbolic metric $H_0$ satisfies Property (\ref{LM}), then
 $d_{ls}$ induces the same topology as that of $d_{qc}$ on $\mathcal{T}_{qc}(S)$.

Furthermore, Shiga showed that there exists a Riemann surface of infinite type such that the length spectrum distance $d_{ls}$ restricted to the quasiconformal Teichm\"uller space $\mathcal{T}_{qc}(S_0)$ is not complete (\cite[Corollary 1.1]{Shiga}). 
We shall give below (Example \ref{example:non-lower}) another example of this phenomenon, by a construction that is probably simpler than the one of Shiga. The hyperbolic structure in this example is also different from the one given by Shiga, because in our example the surface (as a metric space) is complete whereas in Shiga's example it is not.

   We prove below (Proposition \ref{prop:non-inc}) that for some base hyperbolic structures $H_0$, we have  $\mathcal{T}_{ls}(H_0)\not\subset\mathcal{T}_{FN}(H_0)$. 
  We also give an example of a hyperbolic structure $H_0$ and a sequence of points $x_i, i=1, \cdots$  in $\mathcal{T}_{ls}(H_0)\cap \mathcal{T}_{FN}(H_0)$ such that $\lim_{n\to\infty}d_{ls}(x_n,
H_0)=0$ while $\lim_{n\to\infty}d_{FN}(x_n, H_0)=\infty$ (Proposition \ref{prop:non-inc2}).

The length spectrum metric on any Teichm\"uller space of a conformally finite type  Riemann surface  is complete (see \cite[Theorem 2.25]{LP}). The proof in \cite{LP} does not extend to the case of Teichm\"uller spaces of  surfaces of infinite topological type. We prove this result for surfaces of infinite topological type in \S \ref{s:completeness} below. More precisely, we prove that  for any base hyperbolic metrics $H_0$ on $S$, the metric space $(\mathcal{T}_{ls}(H_0), d_{ls})$ is complete (Theorem \ref{Theorem:complete}). This result answers a question we raised in \cite{LP} (Question 2.22).

\section{The length spectrum and the quasiconformal Teichm\"uller spaces}\label{qc-ls}

<for thr reader's convenience, we review briefly a few basic facts about the length spectrum and the quasiconformal Teichm\"uller spaces. 

All the homotopies of a surface that we consider in this paper  preserve the punctures and preserve setwise the boundary components at all times.

       Throughout this section,  $H_0$  is a fixed hyperbolic surface on the surface $S$, called the base hyperbolic structure. 
       Given  a hyperbolic surface $H$ on $S$ and a homeomorphism $f:(S,H_0)\to (S,H)$, we define the  {\it length-spectrum constant of $f$} to be the quantity 
       
 \begin{equation}\label{eq:ls}
L(f)= \sup_{\alpha\in\mathcal{S}(H)} \left\{ \frac{l_{H'}(f(\alpha))}{l_{H}(\alpha)},\frac{l_{H}(\alpha)}{l_{H'}(f(\alpha))}\right\}.
\end{equation}
 
 This quantity depends only on the homotopy class of $f$.
 
     We  say that $f$ is {\it length-spectrum bounded} if $L(f)<\infty$.

In the setting of the length spectrum Teichm\"uller space, we consider the collection of hyperbolic structures $H$ on $S$ such that the identity map $\mathrm{Id}: (S,H_0)\to (S,H)$ is length-spectrum bounded. Given two such hyperbolic structures $H$ and $H'$, we write $H\sim H'$ if there exists an isometry (or, equivalently, a length spectrum preserving homeomorphism)from $f(S,H)$ to $(S',H')$ which is homotopic to the identity.  
The relation $\sim$ is an equivalence relation on the set of length-spectrum bounded hyperbolic structures $H$ with respect to the basepoint $H_0$.

\begin{definition}\label{def:ls} The {\it length-spectrum Teichm\"uller space} $\mathcal{T}_{ls}(H_0)$ is the space of $\sim$-equivalence classes of length-spectrum bounded  hyperbolic structures. The {\it basepoint} of this Teichm\"uller space is the equivalence class $H_0$.
\end{definition}

 We note that the fact that we do not ask our homotopies to preserve pointwise the boundary of the surface corresponds to working with what is usually called the {\it reduced} Teichm\"uller space, instead of Teichm\"uller space. (In the latter case, the homotopies that define the equivalence relation are required to induce the identity map on each boundary component.) Since all the Teichm\"uller spaces that we use in this paper are reduced, we shall use, for simplicity, the terminology {\it Teichm\"uller space} instead of {\it reduced Teichm\"uller space}.

 The topology of $\mathcal{T}_{ls}(H_0)$ is induced by the {\it length-spectrum} metric $d_{ls}$, defined by taking the distance $d_{ls}(H,H')$ between two points in $\mathcal{T}_{ls}(H_0)$ represented by two hyperbolic surfaces $H$ and $H'$ to be
        \begin{equation}\label{eq:d-ls}
        d_{ls}(H,H')=\frac{1}{2}\log L(f'\circ  f^{-1}).
        \end{equation}
        
        (It may be useful to recall here that the length-spectrum constant  of a length-spectrum bounded homeomorphism only depends on the homotopy class of such a homeomorphism.)

The fact that the function $d_{ls}$ satisfies the properties of a metric is straightforward, except perhaps for the separation axiom, see
\cite{LP}.

A \emph{Riemann surface} is a one-dimensional complex manifold.

Given a real number $K\geq 1$, a homeomorphism $f:R\to R'$ between two Riemann surfaces is said to be \emph{$K$-quasiconformal} if $f$ has locally distributional derivatives satisfying at each point the following inequality:
\[\vert f_{\overline{z}}\vert \leq \frac{K-1}{K+1} \vert f_z\vert.\]

The {\it quasiconformal dilatation}, or, in short, the {\it dilatation} of $f$, is the infimum of the real numbers $K$ for which $f$ is $K$-quasiconformal.
         
           In the setting of the quasiconformal Teichm\"uller space with basepoint a Riemann structure surface $R_0$ on $S$, we only consider Riemann surfaces $R$  on $S$ such that the identity map $\mathrm{Id}:(S,R_0)\to (S,R)$ is quasiconformal. Given two such conformal structures $R$ and $R'$, we write $R\sim R'$ if there exists a conformal map from $(S,R)$ to $(S',R')$ which is homotopic to the identity.  
The relation $\sim$ is an equivalence relation on the set of conformal structures $R$ on $S$, with respect to the basepoint $R_0$.

       \begin{definition}\label{def:qc}
        Consider a Riemann surface structure $R_0$ on $X$. Its {\it  quasiconformal Teichm\"uller space}, $\mathcal{T}_{qc}(R_0)$, is the set of $\sim$-equivalence classes of Riemann surface structures on $S$.        
                \end{definition}

         The space $\mathcal{T}_{qc}(R_0)$ is equipped with the  \emph{quasiconformal metric}, also called the \emph{Teichm\"uller metric}, of wxhich we also recall the definition: Given two  
(equivalence classes of)  Riemann surface structures $R$ and $R'$ on $S$,  their  {\it quasiconformal distance} $d_{qc}(R,R')$ is defined as
        \begin{equation}\label{eq:qc} d_{qc}(R,R')=\frac{1}{2}\log \inf \{K(f)\}
        \end{equation}
        where the infimum is taken over  quasiconformal dilatations $K(f)$ of homeomorphisms $f:(S,R)\to (S,R')$ which are homotopic to the identity.

        The equivalence class of the marked Riemann surface $R_0$ is the {\it basepoint} of $\mathcal{T}_{qc}(R_0)$.

        We refer to Nag \cite{Nag} for an exposition of the quasiconformal theory of infinite-dimensional Teichm\"uller spaces.  In particular, it is known that the quasiconformal metric is complete.

Douady and Earle gave in  \cite{DE} a proof of the fact that any quasiconformal Teichm\"uller space $\mathcal{T}_{qc}(R_0)$ is contractible (see \cite[ Theorem 3]{DE}, where this result is also attributed to Tukia).  
It is unknown whether the length spectrum Teichm\"uller spaces are contractible.

\section{The Fenchel-Nielsen Teichm\"uller spaces}

We shall consider Fenchel-Nielsen coordinates for spaces of homotopy classes of hyperbolic structures on $S$. 
We carried out in \cite{ALPSS} a study of these parameters in the setting of surfaces of infinite type.
These parameters are associated to a fixed base hyperbolic structure equipped with a fixed geodesic pair of pants decomposition $\mathcal{P}=\{C_i\}_{i\in I}$. 
The boundary components of $S$ (if they exist) are all homeomorphic to circles, and are part of the curve system $C_i$ in the pair of pants decomposition. Fenchel-Nielsen coordinates are defined in the same way as the Fenchel-Nielsen parameters associated to geodesic pair of pants decomposition in the case of surfaces of finite type, but some care has to be taken regarding the existence of such a pair of pants decomposition in the infinite type case. In the paper \cite{ALPSS} we gave a necessary and sufficient condition on a hyperbolic structure on a surface of infinite type $S$ so that a topological pair of pants decomposition of $S$ can be made geodesic (see \cite[Theorem  4.5]{ALPSS}). We called this condition \emph{Nielsen-convexity}.  

Given a hyperbolic structure $H$ on $S$, to each homotopy class of closed geodesic $C_i\in \mathcal{P}$, we associate a {\it length parameter} and a {\it twist parameter}. The length parameter is the familiar quantity  $l_H(C_i)\in ]0,\infty[$; that is, it is the length of the $H$-geodesic in the homotopy class $C_i$.  
The twist parameter is defined only if $C_i$ is not the homotopy class of a boundary component of $S$, and it measures the relative twist amount along the geodesic in the class $C_i$ between the two generalized pairs of pants that have this geodesic in common (the two pairs of pants can be the same). The definition is the same as the one that is done in the case of surfaces of finite type. A precise definition of the twist parameters is contained in \cite[Theorem 4.6.23]{Thurston}. The twist amount per unit time along the (geodesic in the class) $C_i$ is chosen to be proportional (and not necessarily equal) to arclength along that curve,  and we make the convention, as in \cite{ALPSS}, that a complete positive Dehn twist along the curve $C_i$ changes the twist parameter by addition of $2\pi$. Thus, in some sense, the parameter $\theta_H(C_i)$ that we are using is an ``angle" parameter. 
 
   The {\it Fenchel-Nielsen parameters} of $H$ is the collection of pairs 
$\left((l_H(C_i),\theta_H(C_i))\right)_{i\in I}$, where it
 is understood that if $C_i$ is homotopic to a boundary component, 
 then there is no twist parameter associated to it, and instead of a pair
  $(l_H(C_i),\theta_H(C_i))$, we have a single parameter
$l_H(C_i)$.

If two hyperbolic structures on $S$ are equivalent, then their Fenchel-Nielsen parameters are the same.

Given two hyperbolic metrics $H$ and $H'$ on $S$, we define their {\it Fenchel-Nielsen distance} with respect to $\mathcal{P}$ as
   
\begin{equation}\label{def:FND}
{d_{FN}(H,H')=\sup_{i\in I} \max\left(\left\vert \log \frac{l_H(C_i)}{l_{H'}(C_i)}\right\vert, \vert l_H(C_i)\theta_H(C_i)-l_{H'}(C_i)\theta_{H'}(C_i)\vert \right) },
\end{equation}
again with the convention that if $C_i$ is the homotopy class of a boundary component of $S$, then there is no twist parameter to be considered.  
  
Given two hyperbolic structures $H$ and $H'$ on $S$, we say that they are  {\it Fenchel-Nielsen bounded} (relatively to $\mathcal{P}$) if their Fenchel-distance is finite. Fenchel-Nielsen boundedness is an equivalence relation.

Let $H_0$ be a homotopy class of a hyperbolic structure on $S$, which we shall consider as a base element of Teichm\"uller space.
We consider the collection of homotopy classes of hyperbolic structures $H$ which are Fenchel-Nielsen bounded from $H_0$ and with respect $\mathcal{P}$. Given two such homotopy classes of hyperbolic structures $H$ and $H'$, we write $H\sim H'$ if there exists an isometry from $(S,H)$ to  $(S,H')$ which is homotopic to the identity. The relation $\sim$ is an equivalence relation on the set of Fenchel-Nielsen bounded homotopy classes of hyperbolic surfaces $H$ based at $H_0$.

\begin{definition}[Fenchel-Nielsen Teichm\"uller space] The {\it Fenchel-Nielsen Teich\-m\"uller space} with respect to $\mathcal{P}$ and with basepoint $H_0$, denoted by $\mathcal{T}_{FN}(H_0)$,  is the space of $\sim$-equivalence classes of hyperbolic structures which are  Fenchel-Nielsen bounded relative to $H_0$ and $\mathcal{P}$.

 The function $d_{FN}$ defined in (\ref{def:FND}) is clearly a distance function on $\mathcal{T}_{FN}(H_0)$.  The {\it basepoint} of this Teichm\"uller space is the homotopy class $H_0$.
\end{definition}

 We shall call the distance $d_{FN}$ on $\mathcal{T}_{FN}(H_0)$ the {\it Fenchel-Nielsen distance} relative to the pair of pants decomposition $\mathcal{P}$.
The map 
$$
\mathcal{T}_{FN}(H_0) \ni H \mapsto {(\log(l_H(C_i)), l_H(C_i)\theta_H(C_i))}_{i \in I}\in \ell^\infty $$
 is an isometric bijection between $\mathcal{T}_{FN}(H_0)$ and the sequence space $l^\infty$. It follows from general properties of $l^{\infty}$-norms that the Fenchel-Nielsen distance on $\mathcal{T}_{FN}(H_0)$ is complete.

We prove in the next two propositions that we have in general $\mathcal{T}_{ls}(H_0)\not\subset\mathcal{T}_{FN}(H_0)$ and that the lengh-spectrum distance and Fenchel-Nielsen distance might behave very differently.

\begin{proposition}\label{prop:non-inc}
Let $H_0$ be a hyperbolic structure on $S$, such that there exists a
sequence of homotopy classes of disjoint essential simple closed curves on
$S$ whose lengths tends to 0. Then there exists an element $H$ in $\mathcal{T}_{ls}(H_0)$ with $H\not\in
\mathcal{T}_{FN}(H_0)$.
  \end{proposition}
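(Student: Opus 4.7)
The plan is to produce $H$ from $H_0$ by performing infinitely many commuting Dehn twists, one along each short curve $\gamma_n$, with twist exponents $k_n$ chosen so that $k_n\,l_{H_0}(\gamma_n)\to\infty$ (forcing $d_{FN}(H,H_0)=\infty$) while $k_n\,l_{H_0}(\gamma_n)$ remains comparable to the collar half-width around $\gamma_n$ in $H_0$ (keeping length-spectrum ratios bounded).

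First I would extend the disjoint family $\{\gamma_n\}$ to a geodesic pair of pants decomposition $\mathcal{P}=\{C_i\}_{i\in I}$ of $(S,H_0)$ and work in Fenchel--Nielsen coordinates relative to $\mathcal{P}$. Write $\ell_n=l_{H_0}(\gamma_n)$, and let $w_n$ be the half-width of the standard collar around $\gamma_n$ in $H_0$; the collar lemma gives $w_n\sim\log(1/\ell_n)\to\infty$ as $\ell_n\to 0$. I then pick positive integers $k_n$ with $k_n\ell_n\leq w_n$ and $k_n\ell_n\to\infty$ (for instance $k_n=\lfloor w_n/\ell_n\rfloor$), and let $H$ be the hyperbolic structure having the same Fenchel--Nielsen parameters as $H_0$ with respect to $\mathcal{P}$, except that the twist at each $\gamma_n$ is increased by $2\pi k_n$ (i.e.\ by $k_n$ full Dehn twists along $\gamma_n$).

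The assertion $H\notin\mathcal{T}_{FN}(H_0)$ is immediate from the definition of $d_{FN}$: the contribution of $\gamma_n$ to the supremum is $|l_H(\gamma_n)\theta_H(\gamma_n)-l_{H_0}(\gamma_n)\theta_{H_0}(\gamma_n)|=2\pi k_n\ell_n\to\infty$. The substantive content is $H\in\mathcal{T}_{ls}(H_0)$, for which I must bound $l_H(\alpha)/l_{H_0}(\alpha)$ and its reciprocal uniformly in $\alpha\in\mathcal{S}(S)$. Curves disjoint from every $\gamma_n$ and curves homotopic to some $C_i$ keep their length, so only $\alpha$ crossing some $\gamma_n$ requires attention, and any such $\alpha$ crosses only finitely many of them (since $l_{H_0}(\alpha)<\infty$ and each crossing eats up collar length at least $2w_n>0$). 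A representative of $\alpha$ in $H$ is obtained by cutting the $H_0$-geodesic of $\alpha$ at every crossing with a $\gamma_n$ and splicing in an arc of length $k_n\ell_n$ along $\gamma_n$ to absorb the $k_n$ extra twists imposed at the regluing. This gives
\begin{equation*}
l_H(\alpha)\leq l_{H_0}(\alpha)+\sum_n k_n\ell_n\, i(\alpha,\gamma_n),
\end{equation*}
and the collar lemma supplies the lower bound $l_{H_0}(\alpha)\geq 2\sum_n w_n\, i(\alpha,\gamma_n)$. Hence
\begin{equation*}
\frac{l_H(\alpha)}{l_{H_0}(\alpha)}\leq 1+\max_n\frac{k_n\ell_n}{2w_n}\leq \tfrac{3}{2}.
\end{equation*}
Since $l_H(\gamma_n)=\ell_n$, the collars in $H$ have the same half-widths, and the same argument with the roles of $H_0$ and $H$ exchanged (twisting back by $-k_n$) yields the symmetric bound $l_{H_0}(\alpha)/l_H(\alpha)\leq 3/2$. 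Therefore $d_{ls}(H,H_0)\leq\tfrac12\log(3/2)<\infty$.

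The main obstacle I anticipate is making the spliced-curve estimate rigorous: one has to give a clean cut-and-paste description of $H$ as obtained from $H_0$ by regluing the pants along each $\gamma_n$ after translation by $k_n\ell_n$ (compatibly with the $2\pi k_n$ twist-parameter convention), verify that the piecewise-geodesic curve built above actually lies in the homotopy class of $\alpha$ on $(S,H)$, and check that the collar lemma remains effective for the whole family $\{\gamma_n\}$ simultaneously even if the $\gamma_n$ accumulate on the surface.
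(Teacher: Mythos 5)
Your proposal is correct and follows essentially the same strategy as the paper: an infinite composition of powers of Dehn twists along the short curves, with the exponents calibrated so that the twist displacement $k_n\ell_n$ tends to infinity (destroying Fenchel--Nielsen boundedness) while staying controlled by the collar width (keeping all length-spectrum ratios uniformly bounded). The only differences are in the tuning and the level of detail: the paper takes $t_n\ell_n\sim\log|\log\ell_n|$ and cites \cite{LP} (Proposition 4.7) for the bound $d_{ls}(H_0,H)<\infty$, whereas you take $k_n\ell_n\sim w_n$ and carry out the collar-lemma estimate directly, obtaining the explicit constant $3/2$.
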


\begin{proof}  Without loss of generality, we may assume that there exists a
sequence
 $\alpha_n, n=1,2 \ldots$, of homotopy classes of disjoint essential
closed curves on $S$
 whose lengths satisfy  $l_{H_{0}}(\alpha_n)=\epsilon_n$ with $e^{-(n+1)^2}<\epsilon_n<e^{-n^2}$.

Let $$ t_n= [\frac{\log |\log \epsilon_n |}{\epsilon_n}]+1, \ n=1,
2, \ldots $$ 
where $[r]$ denotes the integral part of the real number $r$.

 For each $n=1, 2, \ldots$, let $\tau_n$ be the $t_n$-th power of the positive Dehn
twist about $\alpha_n$. We take all the positive Dehn twists to be
supported on disjoint annuli, we let $T$ be the infinite composition $\tau_1 \circ \tau_2 \circ \ldots$, and we set $H=T(H_0)$.
 Then for every $n=1,2, \cdots$, we have, from the definition of the Fenchel-Nielsen distance,

  \begin{eqnarray*}
d_{FN}({H_0}, H)) &\geq  & 2\pi  t_n l_{H_{0}}(\alpha_n) \\
&= & 2\pi  t_n \epsilon_n\\
& \geq & 2\pi \log |\log \epsilon_n|.
\end{eqnarray*}

Since
$\lim_{n\to\infty}\epsilon_n=0$, we obtain $d_{FN}({H_0}, H)=\infty$.

The proof that $d_{ls}({H_0},H)<\infty$ is given in \cite[Proposition 4.7]{LP}.
\end{proof}

\begin{proposition}\label{prop:non-inc2}
Let ${H_0}$ be a hyperbolic structure on $X$, such that there exists a sequence of homotopy classes of disjoint essential simple closed curves in
$X$ whose lengths tend to 0. Then there exists a sequence of elements
$x_i$, $i=0,1, \cdots$  in $\mathcal{T}_{ls}({H_0})\cap \mathcal{T}_{FN}({H_0})$ such that $\lim_{n\to\infty}d_{ls}(x_n,
H_0)=0$, while $\lim_{n\to\infty}d_{FN}(x_n, H_0)=\infty$.
  \end{proposition}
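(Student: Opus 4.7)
I would imitate the construction in Proposition~\ref{prop:non-inc}, but instead of compressing infinitely many Dehn twists into a single endpoint I would distribute them into distinct endpoints $x_n$: for each $n$ produce $x_n$ by performing a single finite Dehn-twist power along one very short curve $\alpha_n$, choosing the power $t_n$ large enough that the FN-coordinate shift along $\alpha_n$ blows up, yet small enough (relative to the logarithmic width of the collar around $\alpha_n$) that the induced length-spectrum distortion vanishes.

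\textbf{Construction.} Passing to a subsequence, I may assume the given disjoint essential simple closed geodesics $\alpha_n$ satisfy $\epsilon_n := l_{H_0}(\alpha_n) \leq e^{-n^3}$. Since the $\alpha_n$ are pairwise disjoint I complete them to a geodesic pair of pants decomposition $\mathcal{P} = \{C_i\}_{i\in I}$ of $H_0$ (possible by Nielsen-convexity of $H_0$, which is implicit in the hypothesis that $\mathcal{T}_{FN}(H_0)$ is defined). Set $t_n = \lfloor n/\epsilon_n \rfloor + 1$, let $\tau_n$ denote the $t_n$-th power of the positive Dehn twist along $\alpha_n$, and put $x_n = \tau_n(H_0)$. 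Since $\tau_n$ is supported in a tubular neighborhood of $\alpha_n$, every length parameter and every twist parameter of $x_n$ relative to $\mathcal{P}$ agrees with that of $H_0$ except for $\theta(\alpha_n)$, which changes by exactly $2\pi t_n$. Hence
\[
d_{FN}(x_n, H_0) \;=\; 2\pi t_n \epsilon_n \;\geq\; 2\pi n \;\longrightarrow\; \infty.
\]

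\textbf{Length-spectrum bound.} Fix any essential simple closed geodesic $\beta$ and set $k = i(\beta, \alpha_n)$. If $k = 0$, then $\tau_n$ fixes $\beta$ pointwise and $l_{x_n}(\beta) = l_{H_0}(\beta)$. If $k \geq 1$, the collar lemma yields an embedded collar of half-width $w_n = \operatorname{arcsinh}(1/\sinh(\epsilon_n/2)) \geq \tfrac{1}{2}\log(1/\epsilon_n)$ around $\alpha_n$, and each of the $k$ crossings of $\beta$ contributes length at least $2 w_n$, so $l_{H_0}(\beta) \geq 2 k w_n$. On the other hand, Wolpert's first variation formula gives $|\partial l_\beta/\partial \tau_\alpha| \leq k$ along the twist path parameterized by arclength on $\alpha_n$; integrating over the total shift $t_n \epsilon_n$ corresponding to $t_n$ full Dehn twists gives $|l_{x_n}(\beta) - l_{H_0}(\beta)| \leq k\, t_n \epsilon_n$. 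Combining these two bounds,
\[
\left|\log \frac{l_{x_n}(\beta)}{l_{H_0}(\beta)}\right| \;\leq\; \frac{2 k\, t_n \epsilon_n}{l_{H_0}(\beta)} \;\leq\; \frac{t_n \epsilon_n}{w_n} \;\leq\; \frac{2(n+1)}{\log(1/\epsilon_n)} \;\leq\; \frac{2(n+1)}{n^3},
\]
uniformly in $\beta$ (the factor $2$ absorbs $|\log(1+s)| \leq 2|s|$ for $|s|\leq \tfrac12$, valid once $n$ is large). Taking the supremum over $\beta$ yields $d_{ls}(x_n, H_0) \to 0$, so each $x_n$ lies in $\mathcal{T}_{ls}(H_0) \cap \mathcal{T}_{FN}(H_0)$ and the proposition follows.

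\textbf{Main obstacle.} The delicate ingredient is the uniform length-distortion estimate above, which couples the collar lemma (bounding $l_{H_0}(\beta)$ from below by roughly $k \log(1/\epsilon_n)$) with Wolpert's twist-derivative bound (bounding $|l_{x_n}(\beta) - l_{H_0}(\beta)|$ by $k t_n \epsilon_n$), so that the ratio decays because $\log(1/\epsilon_n)$ dwarfs $t_n \epsilon_n$. The same mechanism underlies the inequality $d_{ls}(H_0, H) < \infty$ used in the proof of Proposition~\ref{prop:non-inc} (attributed there to \cite[Proposition 4.7]{LP}); the novelty here is merely to track the constants carefully enough to extract an explicit decay rate in $n$ rather than mere boundedness.
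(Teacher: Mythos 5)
Your proposal is correct and follows essentially the same route as the paper: both take $x_n=\tau_n(H_0)$ with $\tau_n$ a $t_n$-th power of the Dehn twist along the short curve $\alpha_n$, with $t_n$ calibrated so that $t_n\epsilon_n\to\infty$ (forcing $d_{FN}(x_n,H_0)\to\infty$ via the twist coordinate) while $t_n\epsilon_n/|\log\epsilon_n|\to 0$ (forcing $d_{ls}(x_n,H_0)\to 0$ via the collar lemma combined with the twist-distortion bound $|l_{x_n}(\beta)-l_{H_0}(\beta)|\le i(\beta,\alpha_n)\,t_n\epsilon_n$). The only differences are cosmetic choices of $t_n$ and your explicit tracking of constants.
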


\begin{proof} We consider a set $\alpha_n$ of homotopy classes of disjoint simple closed curves satisfying the same properties as in the proof of Proposition \ref{prop:non-inc}. We take the same definition of $\epsilon_n$, of $t_n$ and of the multiple Dehn twists $\tau_n$ supported on disjoint annuli. 
 Then for each $n=1,2, \cdots$,
 we have

  \begin{eqnarray*}
d_{FN}(H_0, \tau_n(H_0)) &\geq  & 2\pi  t_n l_{H_{0}}(\alpha_n) \\
&= & 2\pi  t_n \epsilon_n\\
& \geq & 2\pi  \log |\log \epsilon_n|.
\end{eqnarray*}
  
Since the above inequality is valid for any $n\ge 1$ and since
$\lim_{n\to\infty}\epsilon_n=0$,  we have $\lim_{n\to\infty}d_{FN}(H_0,
\tau_n(H_0))=\infty$.

Next we show that
\[\label{eq:lss}\lim_{n\to\infty}d_{ls}({H_0}, \tau_n ({H_0}))=\lim_{n\to\infty} \log \sup_{\alpha\in\mathcal{S}(X)} \big\{
\frac{l_{{H_0}}(\tau_n(\alpha))}{l_{H_{0}}(\alpha)},
\frac{l_{H_{0}}(\alpha)}{l_{H_{0}}(\tau_n(\alpha))}\big\}=0.\]

The proof is adapted from the proof of Propositions 2.13 and 4.7 of \cite{LP}. 

 Let $\alpha$
be an arbitrary homotopy class of essential curves in $X.$ 

For $i\in I$, if $i(\alpha, \alpha_n)=0$, then $\alpha=\tau_n(\alpha)$ and $l_{H_{0}}
(\tau_n (\alpha))= l_{H_{0}} (\alpha)$.

Assume now that $i(\alpha, \alpha_n) \neq 0$. By the Collar Lemma (see \cite{Buser}),
on any hyperbolic surface $H$, any closed geodesic whose length
$\epsilon$ is sufficiently small has an embedded collar neighborhood
of width $\vert \log\epsilon\vert$. Thus, we can write,
 for all $n\geq 0$,
 \[ l_{H_{0}}(\tau_n(\alpha)) \geq i(\alpha,\alpha_n)\vert \log \epsilon_n\vert.\]
 
From the definition of a Dehn twist, we then have

\[ l_{H_{0}}(\alpha) \leq  l_{H_{0}}(\tau_n(\alpha)) + i(\alpha,\alpha_n)t_n\epsilon_n.\]
 
 Thus, we obtain
   \begin{eqnarray*}
\frac{l_{H_{0}}(\tau_n(\alpha))}{l_{H_{0}}(\alpha)}
&\leq & + \frac{i(\alpha,\alpha_n)t_n\epsilon_n}{l_{H_{0}}(\tau_n(\alpha))}\\
  \\&\leq&
 1+  \frac {\log|\log \epsilon_n|} {|\log
\epsilon_n|}
  \\&\leq& 1+ 2\frac{\log (n+1)}{n^2}
  \\&\leq & 1+ \frac 2 n.
\end{eqnarray*}
which is bounded independently of $\alpha$ and $n$. In the same way,
we can prove that 
$\displaystyle \frac{l_{H_{0}}(\alpha)}{l_{H_{0}}(\tau_n(\alpha))}\leq 1+ \frac 2 n.$  This gives $\lim_{n\to\infty}d_{ls}({H_0},
\tau_n({H_0}))=\lim_{n\to\infty}d_{ls}(x_n, H_0)=0$.

\end{proof}

\section{Completeness of the length spectrum metric}\label{s:completeness}

In this section, $H_0$ is a hyperbolic structure on $S$,  $(\mathcal{T}_{ls}(H_0), d_{ls})$ is the length-spectrum Teichm\"uller space based at this point, equipped with the length-spectrum distance, 
and
 $\mathcal{P}=\{C_i\}_{i \in I}$ is a hyperbolic pair of pants decomposition of $H_0$. For every hyperbolic structure $H$ on $S$, we denote by ${(l_H(C_i),\theta_H(C_i))}_{i \in I}$ its Fenchel-Nielsen coordinates with respect to $\mathcal{P}$.

\begin{lemma} \label{lemma:pointwise_FN}
Let $(x_n) \subset \mathcal{T}_{ls}(H_0)$ be a sequence converging to a point  $x$ in $\mathcal{T}_{ls}(H_0)$. Then for all $i\in I$ we have $l_{x_n}(C_i) \rightarrow l_x(C_i)$ and $\theta_{x_n}(C_i) \rightarrow \theta_x(C_i)$.  
\end{lemma}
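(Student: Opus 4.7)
The plan is to treat the two assertions separately. The length convergence is immediate from the definition of $d_{ls}$: combining (\ref{eq:d-ls}) and (\ref{eq:ls}) gives, for every $\alpha\in\mathcal{S}(S)$,
\[\bigl|\log \tfrac{l_{x_n}(\alpha)}{l_x(\alpha)}\bigr|\ \le\ 2\, d_{ls}(x_n,x),\]
so the assumption $d_{ls}(x_n,x)\to 0$ forces $l_{x_n}(\alpha)\to l_x(\alpha)$ for \emph{every} $\alpha\in\mathcal{S}(S)$. Specialising to $\alpha=C_i$ yields the first half of the lemma, and I shall re-use the full statement in what follows.

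For the twist convergence, the strategy is to localise to a finite-type subsurface, because on a surface of finite type the length spectrum is known to control the whole Fenchel--Nielsen data. Fix $i\in I$; if $C_i$ is homotopic to a boundary component of $S$ there is no twist to consider, so assume otherwise and let $\Sigma_i\subset S$ be the subsurface formed by the union of the one or two pairs of pants of $\mathcal{P}$ adjacent to $C_i$. Then $\Sigma_i$ is a one-holed torus or a four-holed sphere, bounded by geodesics of $\mathcal{P}\setminus\{C_i\}$, and $\mathcal{P}\cap\Sigma_i=\{C_i\}\cup\partial\Sigma_i$ is a pants decomposition of $\Sigma_i$. Restricting each $x_n$ and $x$ to $\Sigma_i$ gives points $\hat x_n,\hat x$ of the finite-dimensional reduced Teichm\"uller space $\mathcal{T}(\Sigma_i)$, with the same Fenchel--Nielsen parameters as $x_n,x$ on the curves of $\mathcal{P}\cap\Sigma_i$.

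I would then conclude as follows. Every $\alpha\in\mathcal{S}(\Sigma_i)$ also belongs to $\mathcal{S}(S)$, with the same hyperbolic length (since $\partial\Sigma_i$ is geodesic, closed geodesics in $\Sigma_i$ are closed geodesics in $S$). Hence the first step gives $l_{\hat x_n}(\alpha)\to l_{\hat x}(\alpha)$ for every $\alpha\in\mathcal{S}(\Sigma_i)$, including the finitely many boundary components of $\Sigma_i$. Since $\Sigma_i$ is of finite type, only finitely many such length functions are needed to parametrise $\mathcal{T}(\Sigma_i)$ and to define its standard topology (this is the classical fact recalled in the introduction, with references \cite{FLP}, \cite{Li}, \cite{Liu}); equivalently, $\mathcal{T}(\Sigma_i)\hookrightarrow \mathbb{R}_+^{\mathcal{S}(\Sigma_i)}$ is a homeomorphism onto its image. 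Therefore $\hat x_n\to\hat x$ in $\mathcal{T}(\Sigma_i)$, so the Fenchel--Nielsen coordinates of $\hat x_n$ relative to $\mathcal{P}\cap\Sigma_i$ converge to those of $\hat x$; in particular $\theta_{x_n}(C_i)\to\theta_x(C_i)$, and the proof is complete.

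The main obstacle, and the reason for the detour through $\Sigma_i$, is that the twist is not itself a length of a simple closed curve, so $d_{ls}(x_n,x)\to 0$ does not constrain it directly. Cutting down to the finite-type piece $\Sigma_i$ is what allows us to appeal to the classical result that on a finite-type surface the length spectrum recovers the full Fenchel--Nielsen data.
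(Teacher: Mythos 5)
Your argument is correct, and the first half (convergence of the lengths $l_{x_n}(C_i)$) is exactly the paper's. For the twist you take a genuinely different route. The paper works curve by curve: it chooses an auxiliary simple closed curve $\beta_i$ meeting $C_i$ minimally and disjoint from the other $C_j$, together with its image $\beta_i'$ under the Dehn twist along $C_i$, and uses the explicit formulas of \cite{Okai} to write $|\theta(C_i)|$ (and, via $\beta_i'$, $|\theta(C_i)+2\pi|$) as continuous functions of finitely many geodesic lengths; convergence of those lengths gives $|\theta_{x_n}(C_i)|\to|\theta_x(C_i)|$ and $|\theta_{x_n}(C_i)+2\pi|\to|\theta_x(C_i)+2\pi|$, and the second limit pins down the sign of the twist for large $n$. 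You instead pass to the finite-type subsurface $\Sigma_i$ and appeal to the classical fact that on a finite-type surface the length functions determine the topology of the (reduced) Teichm\"uller space, which agrees with the Fenchel--Nielsen topology; this neatly avoids the sign discussion, since the signed twist is a continuous function on $\mathcal{T}(\Sigma_i)$. What each approach buys: the paper's proof is concrete and needs only Okai's formulas, while yours is softer but rests on the finite-type parametrization theorem in the bordered, reduced setting --- true, and asserted in the paper's introduction with references to \cite{FLP}, \cite{Li}, \cite{Liu}, but deserving a precise citation since \cite{FLP} treats surfaces without boundary. One harmless slip: $\Sigma_i$ need not be a one-holed torus or a four-holed sphere; if the two pairs of pants adjacent to $C_i$ share a second curve of $\mathcal{P}$, it is a two-holed torus, which changes nothing in your argument.
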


        \bigskip
  \begin{figure}[!hbp]
\centering
\includegraphics[width=.40\linewidth]{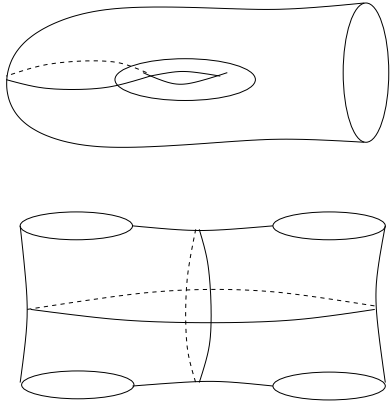}
\caption{\small {The curve $\beta_i$ used in the proof of Lemma \ref{lemma:pointwise_FN}  and of Lemma \ref{lemma:pointwise_FN_2}.
In each case, we have represented the simple closed curves $C_i$ and $\beta_i$.}}
\label{dual}
\end{figure}
\bigskip

\begin{proof}
From the definition of the length-spectrum distance, for every simple closed curve $\gamma \subset S$ we have that $l_{x_n}(\gamma) \rightarrow l_{x}(\gamma)$. In particular $l_{x_n}(C_i) \rightarrow l_x(C_i)$. For every curve $C_i$, we consider an essential simple closed curve $\beta_i$ which is not homotopic to it and intersecting it in a minimal number of points (which is two or one) and which is disjoint from $C_j$ for any $j\not= i$. (See Figure \ref{dual}.)
We let $\beta_i'$ be the image of  $\beta_i$ by the Dehn twist along $C_i$.  We also have $l_{x_n}(\beta_i) \rightarrow l_x(\beta_i)$. By the formulae in \cite{Okai}, the absolute value of the twist parameter along $C_i$ is a continuous function of the length of the curves $C_i, \beta_i$, and of the other curves $C_j$ at the boundaries of the pair of pants containing $C_i$. Hence $|\theta_{x_n}(C_i)| \rightarrow |\theta_x(C_i)|$. If $\theta_x(C_i) = 0$ we are done, otherwise note that by the same argument, using $\beta_i'$ instead of $\beta_i$, we have $|\theta_{x_n}(C_i) + 2 \pi| \rightarrow | \theta_x(C_i) + 2 \pi|$, hence for $n$ large enough, $\theta_{x_n}(C_i)$ and  $\theta_x(C_i)$ have the same sign.
\end{proof}

One may ask whether the converse of this lemma is true, i.e. if $(x_n) \subset \mathcal{T}_{ls}(H_0)$ is some sequence, and if $x\in \mathcal{T}_{ls}(H_0)$ is  such that $l_{x_n}(C_i) \rightarrow l_x(C_i)$ and $\theta_{x_n}(C_i) \rightarrow \theta_x(C_i)$, then is it true that $(x_n) \rightarrow x$ in the lengh-spectrum metric ?

We prove a result of this kind under an additional hypothesis on $(x_n)$, see Lemma \ref{lemma:ls_convergence}.

\begin{lemma} \label{lemma:pointwise_FN_2}
Let $(x_n) \subset \mathcal{T}_{ls}(H_0)$ be a Cauchy sequence. Then there are numbers $l_i \in \mathbb{R}_{>0}$ and $\theta_i \in \mathbb{R}$ such that for all $i\in I$ we have $l_{x_n}(C_i) \rightarrow l_i$ and $\theta_{x_n}(C_i) \rightarrow \theta_i$.  
\end{lemma}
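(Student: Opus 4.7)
The plan is to use the Cauchy hypothesis in its strongest form: for every essential simple closed curve, it forces convergence of the length function along $(x_n)$, and then all the information needed about the Fenchel--Nielsen parameters can be extracted by plugging in suitable curves. The key observation is the elementary inequality
\[ |\log l_{x_n}(\alpha) - \log l_{x_m}(\alpha)| \le 2\, d_{ls}(x_n, x_m), \qquad \alpha \in \mathcal{S}(S), \]
which is immediate from the definition \eqref{eq:d-ls}. Since $(x_n)$ is Cauchy for $d_{ls}$, each sequence $(\log l_{x_n}(\alpha))_n$ is Cauchy in $\mathbb{R}$, and hence $l_{x_n}(\alpha)$ converges to a positive real number. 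Applying this with $\alpha = C_i$ immediately takes care of the length coordinates and gives $l_{x_n}(C_i) \to l_i > 0$.

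For the twist coordinates I would reuse the dual curves $\beta_i$ and $\beta_i'$ from the proof of Lemma \ref{lemma:pointwise_FN}, where $\beta_i'$ is the image of $\beta_i$ under the positive Dehn twist along $C_i$. Applying the same observation to $\alpha = \beta_i$, to $\alpha = \beta_i'$, and to the finitely many curves $C_j \in \mathcal{P}$ bounding the pair(s) of pants adjacent to $C_i$ yields convergence of $l_{x_n}(\beta_i)$, $l_{x_n}(\beta_i')$ and of the neighbouring $l_{x_n}(C_j)$. By Okai's formulae \cite{Okai}, $|\theta_{x_n}(C_i)|$ is a continuous function of $l_{x_n}(C_i)$, $l_{x_n}(\beta_i)$ and these neighbouring lengths; the same is true of $|\theta_{x_n}(C_i) + 2\pi|$, with $\beta_i'$ replacing $\beta_i$. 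Consequently $|\theta_{x_n}(C_i)| \to a_i$ and $|\theta_{x_n}(C_i) + 2\pi| \to b_i$ for some $a_i, b_i \ge 0$.

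The remaining---and, in my view, only genuine---obstacle is to rule out sign oscillation of $\theta_{x_n}(C_i)$, since the length spectrum sees only $|\theta|$ and $|\theta + 2\pi|$. If $a_i = 0$ there is nothing to prove. If $a_i > 0$, then for large $n$ the value $\theta_{x_n}(C_i)$ lies close to $+a_i$ or to $-a_i$; along indices of the first type one has $|\theta_{x_n}(C_i) + 2\pi| \to a_i + 2\pi$, while along indices of the second type one has $|\theta_{x_n}(C_i) + 2\pi| \to |2\pi - a_i|$. A direct check shows that the equality $a_i + 2\pi = |2\pi - a_i|$ forces $a_i = 0$, contradicting our assumption. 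Hence the two types of indices cannot both occur infinitely often, so the sign of $\theta_{x_n}(C_i)$ is eventually constant, and $\theta_{x_n}(C_i)$ converges to $\theta_i := \pm a_i$, the sign being prescribed by which of $a_i + 2\pi$ and $|2\pi - a_i|$ coincides with $b_i$.
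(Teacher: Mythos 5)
Your proof follows essentially the same route as the paper's: Cauchyness of $\log l_{x_n}(\gamma)$ gives convergence of every length function, and Okai's formulae applied to $C_i$, $\beta_i$ and $\beta_i'$ recover the limits of $|\theta_{x_n}(C_i)|$ and $|\theta_{x_n}(C_i)+2\pi|$, from which the twist limit is determined. Your explicit argument ruling out sign oscillation (the observation that $a_i+2\pi=|2\pi-a_i|$ forces $a_i=0$) is a correct and welcome sharpening of a step the paper only sketches when it assigns the sign of $\theta_i$.
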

\begin{proof}
By the definition of the length-spectrum distance, for every simple closed curve $\gamma$ on $S$, the sequence $\log(l_{x_n}(\gamma))$ is a Cauchy sequence of real numbers. In particular, there exists a positive real number $l_\gamma$ such that $l_{x_n}(\gamma)\rightarrow l_\gamma$. In particular $l_{x_n}(C_i) \rightarrow l_{C_i} = l_i$. Consider the curves $\beta_i, \beta_i'$ as in Lemma \ref{lemma:pointwise_FN}. By using the formulae of \cite{Okai} as in Lemma \ref{lemma:pointwise_FN}, we can see that $|\theta_{x_n}(C_i)|$ converges to a non-negative real number. If this number is zero, we put $\theta_i = 0$, otherwise we  choose $\theta_i$ such that $|\theta_i|$ is that number. To choose the sign of $\theta_i$, we use the limit of the sequence $|\theta_{x_n}(C_i) + 2 \pi|$, the sign of $\theta_i$ being positive if this limit is greater that $l_i$, otherwise this sign being negative. With these choices we have $\theta_{x_n}(C_i) \rightarrow \theta_i$. 
\end{proof}

\begin{lemma} \label{lemma:length_convergence}
Let $(x_n) \subset \mathcal{T}_{ls}(H_0)$ be a sequence, and let $x\in \mathcal{T}_{ls}(H_0)$ be such that $l_{x_n}(C_i) \rightarrow l_x(C_i)$ and $\theta_{x_n}(C_i) \rightarrow \theta_x(C_i)$.  Then for every element $\gamma$ in $\mathcal{S}$, we have $l_{x_n}(\gamma) \rightarrow l_x(\gamma)$.
\end{lemma}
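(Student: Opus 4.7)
The plan is to reduce the statement to the classical fact that in the Teichm\"uller space of a surface of \emph{finite} type, the length of a simple closed curve is a continuous function of the Fenchel--Nielsen coordinates associated with a geodesic pair of pants decomposition. The key observation is that an essential simple closed curve $\gamma$ on $S$ is compact, hence has non-zero geometric intersection number with only finitely many of the $C_i$'s and is contained in the union of finitely many of the pairs of pants cut out by $\mathcal{P}$.

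First I would fix such a $\gamma$ and choose a finite subsurface $S_\gamma \subset S$ that is the union of all the pairs of pants of $\mathcal{P}$ that meet $\gamma$, together with any pair of pants adjacent to them along a common boundary component. Then $S_\gamma$ is a compact surface of finite topological type whose boundary is a finite subcollection of the curves $C_i$, and $\gamma$ sits in the interior of $S_\gamma$ and is essential there. Let $I_\gamma \subset I$ denote the (finite) set of indices $i$ such that $C_i \subset S_\gamma$.

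Next I would restrict the hyperbolic structures $x_n$ and $x$ to $S_\gamma$. Each restriction $x_n|_{S_\gamma}$ is a hyperbolic structure on $S_\gamma$ with totally geodesic boundary, completely determined up to isotopy (rel.\ nothing, since we are in the reduced setting) by the finite Fenchel--Nielsen data $\bigl(l_{x_n}(C_i), \theta_{x_n}(C_i)\bigr)_{i \in I_\gamma}$, with twist parameters omitted for those $C_i$ that are boundary components of $S_\gamma$. By hypothesis this finite tuple of real numbers converges to the corresponding tuple for $x|_{S_\gamma}$. By the standard Fenchel--Nielsen parametrization for surfaces of finite type (see for example \cite{Buser}), the points $x_n|_{S_\gamma}$ therefore converge to $x|_{S_\gamma}$ in the finite-dimensional Teichm\"uller space of $S_\gamma$.

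Finally, in the Teichm\"uller space of the finite-type surface $S_\gamma$, the geodesic length function $l_{(\cdot)}(\gamma)$ is continuous (this is classical, and follows directly from the trace formulas or from the continuous dependence of a hyperbolic structure on its Fenchel--Nielsen coordinates). Hence
\[
l_{x_n}(\gamma) = l_{x_n|_{S_\gamma}}(\gamma) \longrightarrow l_{x|_{S_\gamma}}(\gamma) = l_x(\gamma),
\]
which is what we wanted. The only mildly delicate point is that $\gamma$ may be homotopic to one of the boundary curves $C_j$ of $S_\gamma$ inside $S_\gamma$ while being essential in $S$; by enlarging $S_\gamma$ by one extra pair of pants on each relevant side (as already built into the definition of $S_\gamma$ above), this difficulty is avoided, and one can then appeal to the finite-type result without further subtlety.
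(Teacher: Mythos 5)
Your proposal is correct and follows essentially the same route as the paper: both reduce to a finite union of pairs of pants containing $\gamma$, note that only finitely many Fenchel--Nielsen coordinates are involved, and conclude by finite-type continuity (the paper phrases the last step as $d_{FN}\to 0 \Rightarrow d_{qc}\to 0 \Rightarrow d_{ls}\to 0$ via \cite{ALPSS} and Wolpert's inequality, whereas you invoke continuity of length functions in Fenchel--Nielsen coordinates directly, which amounts to the same classical fact).
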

\begin{proof}
The closed curve $\gamma$ is compact, hence it is contained in a subsurface $S'$ of $S$ that is the union of finitely many pairs of pants of the decomposition $\mathcal{P}$. Choose representatives in the equivalence classes of the structures $x_n$ and $x$ such that the boundary curves of $S'$ are geodesics. Consider the restrictions $x_n'$ and $x'$ of our hyperbolic structures to $S'$. On this finite type subsurface there are only finitely many Fenchel-Nielsen coordinates, hence the surfaces $x_n'$ and $x'$ are upper bounded and $d_{FN}(x_n',x') \rightarrow 0$. By the result in \cite{ALPSS}, we have $d_{qc}(x_n',x') \rightarrow 0$, which, by Wolpert's Inequality, implies $d_{ls}(x_n',x') \rightarrow 0$. In particular $l_{x_n}(\gamma) \rightarrow l_x(\gamma)$.   
\end{proof}

\begin{lemma}  \label{lemma:ls_convergence}
Let $(x_n) \subset (\mathcal{T}_{ls}(H_0), d_{ls})$ be a Cauchy sequence, and let $x\in \mathcal{T}_{ls}(H_0)$ be such that $l_{x_n}(C_i) \rightarrow l_x(C_i)$ and $\theta_{x_n}(C_i) \rightarrow \theta_x(C_i)$.  Then $d_{ls}(x_n,x) \rightarrow 0$.
\end{lemma}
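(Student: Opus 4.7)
The plan is to combine the pointwise length-convergence provided by Lemma \ref{lemma:length_convergence} with the uniformity encoded in the Cauchy hypothesis. By Lemma \ref{lemma:length_convergence}, the hypothesis on Fenchel--Nielsen coordinates immediately upgrades to pointwise convergence of all simple-closed-curve lengths: for every $\gamma \in \mathcal{S}$, we have $l_{x_n}(\gamma) \to l_x(\gamma)$.

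Next I would unwind the definition of $d_{ls}$ to turn the Cauchy property into a uniform estimate over $\mathcal{S}$. Fix $\epsilon > 0$. Since $(x_n)$ is Cauchy, there is an integer $N$ such that $d_{ls}(x_n, x_m) \le \epsilon$ whenever $n, m \ge N$, which by the definition of $d_{ls}$ in (\ref{eq:d-ls}) means
\[
\left|\log \frac{l_{x_n}(\gamma)}{l_{x_m}(\gamma)}\right| \le 2\epsilon
\qquad \text{for every } \gamma \in \mathcal{S} \text{ and every } n, m \ge N.
\]

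Now I would fix $n \ge N$ and an arbitrary $\gamma \in \mathcal{S}$, and let $m \to \infty$. By the first step, $l_{x_m}(\gamma) \to l_x(\gamma) > 0$, so by continuity of $\log$ the inequality passes to the limit, yielding
\[
\left|\log \frac{l_{x_n}(\gamma)}{l_{x}(\gamma)}\right| \le 2\epsilon
\qquad \text{for every } \gamma \in \mathcal{S} \text{ and every } n \ge N.
\]
Taking the supremum over $\gamma \in \mathcal{S}$ gives $\log L(\mathrm{Id}: (S, x_n) \to (S, x)) \le 2\epsilon$, i.e. $d_{ls}(x_n, x) \le \epsilon$ for all $n \ge N$. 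Hence $d_{ls}(x_n, x) \to 0$.

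The only delicate point is the limit passage inside the logarithm, but this is harmless since the limits $l_x(\gamma)$ are strictly positive (both $x$ and the $x_n$ are genuine hyperbolic structures in $\mathcal{T}_{ls}(H_0)$). No new estimate on individual pairs of pants is required beyond Lemma \ref{lemma:length_convergence}, because the Cauchy assumption already provides a modulus of continuity uniform in $\gamma$, and we simply transport this modulus across the pointwise limit $m \to \infty$.
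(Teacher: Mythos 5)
Your proposal is correct and follows essentially the same route as the paper's own proof: use Lemma \ref{lemma:length_convergence} to get pointwise convergence $l_{x_m}(\gamma)\to l_x(\gamma)$, pass the uniform Cauchy bound $\left|\log\bigl(l_{x_n}(\gamma)/l_{x_m}(\gamma)\bigr)\right|$ to the limit in $m$, and take the supremum over $\gamma$. Your version is in fact slightly more careful than the paper's, since you track the factor of $2$ coming from the $\tfrac12\log$ in the definition of $d_{ls}$ and justify the limit passage inside the logarithm by the positivity of $l_x(\gamma)$.
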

\begin{proof}
By hypothesis, $(x_n)$ is a Cauchy sequence; that is: 
$$\forall \epsilon >0, \exists N: \forall n,m > N, d_{ls}(x_n,x_m) < \epsilon$$
Take an element $\gamma$ of $\mathcal{S}$. From the above property, we have, $\forall n,m > N, $
$$\left|\log\frac{l_{x_n}(\gamma)}{l_{x_m}(\gamma)}\right| < \epsilon$$
By Lemma \ref{lemma:length_convergence} we have $l_{x_m}(\gamma) \rightarrow l_x(\gamma)$, hence $\forall n > N, $ 
$$\left|\log\frac{l_{x_n}(\gamma)}{l_{x}(\gamma)}\right| \leq \epsilon$$
Taking the supremum over all $\gamma$ in $\mathcal{S}$, we have

$$\forall \epsilon >0, \exists N: \forall n > N, d_{ls}(x_n,x) \leq \epsilon;$$
that is, $x_n\to x$.
\end{proof}

\begin{theorem}\label{Theorem:complete}
For any hyperbolic metric $H_0$ on $S$, the metric space $(\mathcal{T}_{ls}(H_0), d_{ls})$ is complete.
\end{theorem}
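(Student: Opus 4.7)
The plan is to reduce completeness to the three lemmas just established: we shall use Lemma \ref{lemma:pointwise_FN_2} to extract pointwise limits of the Fenchel--Nielsen coordinates along a given Cauchy sequence, promote these limits to a genuine hyperbolic structure, verify that this structure lies in $\mathcal{T}_{ls}(H_0)$, and then apply Lemma \ref{lemma:ls_convergence} to conclude.

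More concretely, let $(x_n)$ be a Cauchy sequence in $(\mathcal{T}_{ls}(H_0),d_{ls})$. By Lemma \ref{lemma:pointwise_FN_2}, for each $i\in I$ we obtain limits $l_i=\lim l_{x_n}(C_i)>0$ and $\theta_i=\lim\theta_{x_n}(C_i)\in\mathbb{R}$. I would then build a candidate limit $x$ by gluing, along the combinatorics of $\mathcal{P}$, the hyperbolic pairs of pants whose boundary lengths are prescribed by the $l_i$'s, with twist parameters given by the $\theta_i$'s. This produces a marked hyperbolic structure on $S$ whose Fenchel--Nielsen coordinates with respect to $\mathcal{P}$ are exactly $(l_i,\theta_i)_{i\in I}$.

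The central step is to show that $x$ actually belongs to $\mathcal{T}_{ls}(H_0)$, i.e.\ that $d_{ls}(x,H_0)<\infty$. Since $(x_n)$ is Cauchy, it is bounded in $d_{ls}$: there exists $M>0$ with $d_{ls}(x_n,H_0)\le M$ for all $n$. Thus, for every $\gamma\in\mathcal{S}$ and every $n$,
\[ e^{-2M}\,l_{H_0}(\gamma)\;\le\;l_{x_n}(\gamma)\;\le\;e^{2M}\,l_{H_0}(\gamma). \]
By construction the Fenchel--Nielsen coordinates of $x_n$ converge pointwise to those of $x$, so Lemma \ref{lemma:length_convergence} yields $l_{x_n}(\gamma)\to l_x(\gamma)$ for every $\gamma\in\mathcal{S}$. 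Passing to the limit in the displayed inequality gives the same two-sided bound for $l_x(\gamma)$, uniformly in $\gamma$, hence $d_{ls}(x,H_0)\le M<\infty$ and $x\in\mathcal{T}_{ls}(H_0)$. Finally, since the Cauchy sequence $(x_n)$ and the point $x$ satisfy the hypotheses of Lemma \ref{lemma:ls_convergence} (coordinatewise convergence of lengths and twists), that lemma gives $d_{ls}(x_n,x)\to 0$.

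I expect the only non-routine step to be justifying that the pointwise Fenchel--Nielsen data $(l_i,\theta_i)$ really do assemble into a hyperbolic structure on $S$ to which $\mathcal{P}$ is a geodesic pair of pants decomposition: one must argue that the uniform boundedness inherited from the Cauchy condition, combined with the fact that each $x_n$ is obtained from $H_0$ by modifying these coordinates, lets the construction be carried out locally pair-of-pants by pair-of-pants and then glued coherently with respect to the marking coming from $H_0$. Once this structure $x$ is identified, everything else is a direct invocation of the preceding lemmas.
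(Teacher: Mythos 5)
Your proposal follows essentially the same route as the paper: extract pointwise limits of the Fenchel--Nielsen coordinates via Lemma \ref{lemma:pointwise_FN_2}, build the candidate limit surface from these data, and conclude with Lemma \ref{lemma:ls_convergence}. You in fact supply a verification the paper leaves implicit --- that the constructed surface lies in $\mathcal{T}_{ls}(H_0)$ --- though note that when you invoke Lemma \ref{lemma:length_convergence} for this purpose, its stated hypothesis already assumes $x\in\mathcal{T}_{ls}(H_0)$; since its proof only uses coordinatewise convergence of the Fenchel--Nielsen data, the apparent circularity is harmless.
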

\begin{proof}
This is a direct corollary of Lemmas \ref{lemma:pointwise_FN_2} and \ref{lemma:ls_convergence}. Take a Cauchy sequence $(x_n)$ in $\mathcal{T}_{ls}(X)$. By Lemma \ref{lemma:pointwise_FN_2}, we can find the limits of length and twist parameters  $(l_i, \theta_i)$ of $C_i$. Use these numbers to construct a marked hyperbolic surface with Fenchel-Nielsen coordinates $(l_i, \theta_i)$. By Lemma \ref{lemma:ls_convergence}, the sequence $x_n$ converges to this marked hyperbolic surface. Hence every Cauchy sequence has a limit.
\end{proof}

\begin{remarks}
1) Theorem \ref{Theorem:complete} answers Question 2.22 of \cite{LP}, which asks for necessary and sufficient condition for a hyperbolic structure $S$ on a  Riemann surface of infinite topological type under which the length-spectrum Teichm\"uller space $(\mathcal{T}_{ls}(H_0), d_{ls})$ is complete.

2) The proof of Theorem \ref{Theorem:complete} also works for surfaces of finite type. For such surfaces, the result was already known, see \cite [Theorem 2.25]{LP}.

\end{remarks}

\begin{question} we have the inclusion $\mathcal{T}_{qc}(X) \subset \mathcal{T}_{ls}(X)$, and we proved that this is not always an equality. Is it true that $\mathcal{T}_{qc}(X)$ is dense in $\mathcal{T}_{ls}(X)$ ?  If this were true, $(\mathcal{T}_{ls}(X),d_{ls})$ would be the metric completion of $(\mathcal{T}_{qc}(X),d_{ls})$.
 \end{question}

\section{More examples}
In this section, we give examples of a  hyperbolic structure $H_0$  such that the restriction of the length-spectrum metric $d_{ls}$
to the Teichm\"uller space $\mathcal{T}_{qs}(H)$  is not complete. Of course, the hyperbolic structures does not satisfy Shiga's condition (\ref{LM}).

The first example is an adaptation of an example that was given in \cite{LSW}.
\begin{example}\label{example:non-lower}
Let $H_0$ be a hyperbolic surface with a pants decomposition $\mathcal{P}=\{C_i\ \vert \ i\in I\}$, such that for some
subsequence of $C_{i_k}$ contained in the interior of $H_0$, $l_{H_{0}}(C_{i_k})=\epsilon_k\to 0$. For each $n=1,2,\ldots$, let $H_n$
be the hyperbolic surface obtained
by a positive multi-Dehn twist of $H_0$ along $C_{i_n}$ of order $t_n=[\log|\log \epsilon_n|]$. Note that
$t_n\to \infty$ as $n\to\infty$ but $\frac{t_n}{log \epsilon_n}\to 0$. We show that
$$d_{ls}(H_n,H_0)\to 0, \ \mbox{ while } \ d_{qc}(H_n,H_0)\to \infty.$$
\end{example}

Let us first show that $d_{ls}(H_n,H_0)\to 0$.
Recall that the length  spectrum metric is defined by
$$d_{ls}(H_n,H_0) = \max \log
\sup_\gamma\frac{l_{H_n}(\gamma)}{l_{H_0}(\gamma)},\log
\sup_\gamma\frac{l_{H}(\gamma)}{l_{H_n}(\gamma)}\},$$
where the supremum is taken over all essential simple closed curves $\gamma$ on $S$.

If for some $k$ a simple closed curve $\gamma$ does not intersect $C_{i_k}$, then
the hyperbolic length of $\gamma$ is invariant under the twist along $C_{i_k}$.
If $\gamma$ intersects $C_{i_k}$, we have
$$l_{H_k}(\gamma)-i(\gamma,C_{i_k})t_k \leq l_{H_{0}}(\gamma)\leq l_{H_k}(\gamma)+i(\gamma,C_i)t_k.$$
As a result,
$$d_{ls}(H_k,H_0)
= \max \{\log
\sup_{i(\gamma, C_{i_k})\ne 0}\frac{l_{H_k}(\gamma)}{l_{H}(\gamma)},\log
\sup_{ i(\gamma, C_{i_k})\ne 0}\frac{l_{H_0}(\gamma)}{l_{H_k}(\gamma)}\}.
$$

We have
$$\log\frac{l_{H_k}(\gamma)}{l_{H_0}(\gamma)} \leq \log \frac{ l_{H_0}(\gamma)+i(\gamma,C_{i_k})t_k}{l_{H_0}(\gamma)}
=\log (1+\frac{i(\gamma,C_{i_k})t_k}{l_{H_0}(\gamma)})
\leq \frac{i(\gamma,C_{i_k})t_k}{l_{H_0}(\gamma)},$$
and similarly,
$$\log\frac{l_{{H_0}}(\gamma)}{l_{H_k}(\gamma)} \leq \log \frac{l_{H_0}(\gamma)} { l_{H_0}(\gamma)-i(\gamma,C_{i_k})t_k}\leq \frac{i(\gamma,C_{i_k})t_k}{l_{H_0}(\gamma)}.$$
Thus, we have
\begin{equation}\label{inequ:ls}
d_{ls}(H_k,H_0) \le
\sup_{i(\gamma, C_{i_k}) \ne 0}\frac{i(\gamma, C_{i_k})t_k}{l_{H_0}(\gamma)}.
\end{equation}

Note that as $l_{H_0}(C_{i_k})\rightarrow 0$, for any $i(\gamma,C_{i_k})\neq 0$, $l_{H_0}(\gamma)$ tends to infinity. In particular, if $l_{H_0}(C_{i_k})=\epsilon_k$,
then it follows from the Collar Lemma (see \cite{Buser}) that $l_{H_0}(\gamma)$ is bigger than $i(\gamma,C_{i_{k}})|\log \epsilon_k|$, up to a multiplicative constant.

Thus, we assume that every $\epsilon_k$ is less than some fixed constant $M>0$. Then there is a constant $C$ depending on $M$, such that,  $l_{H_0}(\gamma)$ is larger than $C i(\gamma, C_{i_k}) |\log \epsilon_k|$, as follows from the Collar Lemma \cite{Buser}. This lemma says that, for each simple closed geodesic with length $\ell$ less than $M$, there is a collar neighborhood of width larger than 
$w$, where $w$ is given by 
$$\sinh w= 1/ \sinh(\ell/2).$$
A simple computation shows that there is a constant $C$ depending on $M$ such that $w$ is larger than 
$C |\log \ell|$.
Since any simple closed curve $\gamma$ which intersects with $C_{i_k}$ should cross the collar neighborhood for $ i(\gamma, C_{i_k}) $ times, its hyperbolic length should be larger than $C i(\gamma, C_{i_k}) |\log \epsilon_k|$.

 As a result, the right hand side of Inequality $(\ref{inequ:ls})$ tends to $0$ as $k\to \infty$.
Thus we have
$d_{ls}(H_k,{H_0})\to 0. $

The proof of  $d_{qc}(H_k,H_0)\to \infty$ is given by Lemma 7.2 in \cite{ALPSS}.

For a generalization, see Theorem 7.6 in \cite{ALPSS}, and Theorem \ref{thm:multi} below.

\begin{example}\label{example:non-upper}
Let $H_0$ be a hyperbolic surface with a hyperbolic pants decomposition $\mathcal{P}=\{C_i\}$ such that for some
subsequence of $C_{i_k}$ contained in the interior of $H_0$ satisfies

(i) $l_{H_0}(C_{i_k})=a_k\to \infty$.

(ii) For any geodesic arc $\alpha$ connecting two points (not necessary distinct) on $C_{i_k}$, but $\alpha \subsetneq C_{i_k}$,
$\alpha$ has hyperbolic length $l_{H_0}(\alpha)> ka_k$. \\

Let $H_k$ be the hyperbolic surface obtained by positive Dehn twist of $H_0$ along $C_{i_k}$. Then for any simple
closed curve $\gamma$ such that $i(C_{i_k},\gamma) \not= 0$, $l_{H_0}(\gamma)$ is bigger than $k i(\gamma,C_{i_k})a_k$
and $l_{H_0}(\gamma)-i(\gamma,C_{i_k})a_k\leq l_{H_k}(\gamma)\leq l_{H_0}(\gamma)-i(\gamma,C_{i_k})a_k$. The same arguments with
the above example show that on the Teichm\"uller space $\mathcal{T}(H_0)_{qc}$, $d_{ls}(H_k,H_0)\to 0$ while $d_{qc}(H_k,H_0)\to \infty$.
\begin{remark}
Concrete examples satisfying the conditions (i) and (ii) in Example \ref{example:non-upper} are constructed
 by Shiga \cite{Shiga} and Matsuzaki \cite{Matsuzaki}.  Both of these examples also satisfy the condition that
the number of simple closed geodesics on $H_0$ whose lengths are uniformly
bounded from above is finite, therefore the hyperbolic structures are different from those of Example \ref{example:non-lower}. 
In the  example of Shiga \cite{Shiga}, the Riemann surface induced by $H_0$ is not complete. Matsuzaki \cite{Matsuzaki} has
refined Shiga's construction to give a complete Riemann surface $H_0$ and then he showed
that for such an $H_0$, the Teichm\"uller modular group $\mathrm{Mod}(H_0)$ has only a countable number
of elements.
\end{remark}
\end{example}
\begin{remark}
The above two examples show that there exist hyperbolic surfaces $H_0$ of infinite topological type, 
which do not satisfy Shiga's condition, such that $d_{qc}$ and $d_{ls}$
are
not topologically equivalent on $\mathcal{T}_{qc}(H_0)$.
\end{remark}

We now give a proof of a theorem due to Shiga \cite{Shiga}, which is different from the one given by Shiga.

\begin{theorem}\label{th:non-complete}
There exist surfaces $S$ of infinite topological type and hyperbolic structures on such surfaces such that the length-spectrum metric is not complete on $(\mathcal{T}_{qc}(H_0),d_{ls})$.
\end{theorem}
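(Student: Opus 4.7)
The plan is to adapt the construction of Example~\ref{example:non-lower}, but to produce a Cauchy sequence by applying Dehn twists on a growing collection of disjoint curves rather than on a single varying curve. Theorem~\ref{Theorem:complete} will provide a $d_{ls}$-limit in $\mathcal{T}_{ls}(H_0)$; the remaining task is to check that it falls outside $\mathcal{T}_{qc}(H_0)$.

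Take $H_0$ as in Example~\ref{example:non-lower}, with interior curves $C_{i_k}$ of length $\epsilon_k$ satisfying $\epsilon_k<e^{-k^2}$, and set $t_k=[\log|\log\epsilon_k|]$. Let $\tau_k$ denote the positive multi-Dehn twist of order $t_k$ along $C_{i_k}$. Since the $\tau_k$ are supported on disjoint collars they commute, and I set
\[
H_n=\tau_n\circ\tau_{n-1}\circ\cdots\circ\tau_1(H_0).
\]
Each $H_n$ is obtained from $H_0$ by a finite composition of Dehn twists on disjoint annuli of finite modulus, so $H_n\in\mathcal{T}_{qc}(H_0)$.

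The first step is to verify that $(H_n)$ is $d_{ls}$-Cauchy. For $m>n$ and an essential simple closed curve $\gamma$, the Dehn twist length formula gives $|l_{H_m}(\gamma)-l_{H_n}(\gamma)|\le \sum_{k=n+1}^{m} i(\gamma,C_{i_k})\,t_k$, while the Collar Lemma gives $l_{H_n}(\gamma)\ge C\,i(\gamma,C_{i_k})\,|\log\epsilon_k|$ whenever $i(\gamma,C_{i_k})\neq 0$. Combining these as in the proof of Inequality~(\ref{inequ:ls}) yields
\[
d_{ls}(H_n,H_m)\le\sup_{k>n}\frac{t_k}{C\,|\log\epsilon_k|}=\sup_{k>n}\frac{\log|\log\epsilon_k|}{C\,|\log\epsilon_k|}\longrightarrow 0
\]
as $n\to\infty$. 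So $(H_n)$ is Cauchy, and by Theorem~\ref{Theorem:complete} has a $d_{ls}$-limit $H\in\mathcal{T}_{ls}(H_0)$; geometrically $H$ is the hyperbolic structure obtained from $H_0$ by performing all the $\tau_k$ simultaneously.

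The hard step is to show $d_{qc}(H,H_0)=\infty$, so that $H\notin\mathcal{T}_{qc}(H_0)$. Any $K$-quasiconformal map $f:H_0\to H$ homotopic to the identity would have to realize the twist $\tau_k$ on the hyperbolic collar of $C_{i_k}$ for every $k$. But the ratio between the number of full Dehn twists represented by $\tau_k$ and the modulus of this collar is of order $t_k\to\infty$, and the quasiconformal dilation required to realize that many twists on a cylinder of this modulus grows without bound; this is exactly the lower bound used to prove $d_{qc}(H_k,H_0)\to\infty$ in Example~\ref{example:non-lower} (Lemma~7.2 of \cite{ALPSS}), applied simultaneously in every collar. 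No finite $K$ can dominate all $k$, so $H\notin \mathcal{T}_{qc}(H_0)$. Hence $(H_n)$ is a $d_{ls}$-Cauchy sequence in $\mathcal{T}_{qc}(H_0)$ with no limit in that space, proving the theorem.
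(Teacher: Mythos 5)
Your proposal is correct and follows essentially the same route as the paper: the same cumulative multi-twist sequence $H_n=\tau_n\circ\cdots\circ\tau_1(H_0)$, the same Collar Lemma estimate for the Cauchy property, and the same final ingredient from \cite{ALPSS} showing the limiting infinite multi-twist surface is at infinite quasiconformal distance from $H_0$ (the paper packages this as Theorem \ref{thm:multi}, i.e.\ \cite[Theorem 7.6]{ALPSS}, which directly bounds $\sup_k|t_k|$ by $d_{qc}$, rather than your collar-by-collar appeal to Lemma 7.2). The only cosmetic difference is that you invoke Theorem \ref{Theorem:complete} to produce the limit in $\mathcal{T}_{ls}(H_0)$ and then exclude it from $\mathcal{T}_{qc}(H_0)$, whereas the paper argues by contradiction, identifying the Fenchel--Nielsen coordinates that any putative limit in $\mathcal{T}_{qc}(H_0)$ would be forced to have.
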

\begin{proof}
Examples of such hyperbolic structures are  those given in Examples \ref{example:non-lower} and \ref{example:non-upper} above.
We shall prove the required property for the structures in Example \ref{example:non-lower}. The proof for the structures given in Example \ref{example:non-upper} is similar.

 We consider the surface of Example \ref{example:non-lower} and we construct a Cauchy sequence in $(\mathcal{T}(H_0),d_{ls})$ that does not have a limit.    

Recall that  $H_0$ is a hyperbolic surface with a pants decomposition $\mathcal{P}=\{C_i\}$, such that for some
subsequence of $C_{i_k}$ contained in the interior of $H_0$, $l_{H_0}(C_{i_k})=\epsilon_k\to 0$, and that $t_k=[\log|\log \epsilon_k|]$.
Let $H_1$ be the hyperbolic surface obtained from $H_0$
by the  positive multiple Dehn twist of order $t_1$ along $C_{i_1}$. More generally, for all $k\geq 0$, let $H_{k}$ be the hyperbolic surface obtained from $H_{k-1}$
by the positive multiple Dehn twist  of order $t_k$ along $C_{i_k}$. Then as in the proof of Example \ref{example:non-lower},
we can show that $d_{ls}(H_m,H_n)\to 0$ as $m,n\to \infty.$ As a result, $(H_k)$ is Cauchy sequence in $(\mathcal{T}(H_0),d_{ls})$.
We prove that $(H_k)$ has no limit in $(\mathcal{T}_{qc}(H_0),d_{ls})$, by contradiction. Suppose there
is a hyperbolic surface $H\in \mathcal{T}_{qc}(H_0)$, such that $d_{ls}(H_k,H)\to 0$. 
Consider the Fenchel-Nielsen coordinates  determined by $H_0$ and $\mathcal{P}$.
From the construction of the sequence $(H_k)$, the Fenchel-Nielsen coordinates of $H$ are 
$\{(l_{H}(C_i),\theta_H(C_i))\}$, where $l_H(C_i)=l_{H_0}(C_i)$, $\theta_{H}(C_{i_k})$, with $\theta_{H}(C_{i_k})=2\pi \theta_k$ and $\theta_{H}(C_j)=0$ when $j\neq i_k$. 
We claim that $d_{qc}(H_0,H)=\infty$. As a result,
$H$ does not belong to $\mathcal{T}_{qc}(H_0)$, which is contradicted by the assumption. 

 To show that $d_{qc}(H_0,H)=\infty$, we use the following theorem  \cite[Theorem 7.6]{ALPSS}:
\begin{theorem}\label{thm:multi}
Let $H_0$ be a hyperbolic surface with a hyperbolic pair of pants decomposition $P=\{C_i\}$, and assume that there exists a positive constant $L_0$ such that $l_{H_0}(C_{i_k})\leq L$ for all
$k=1,2, \ldots $. Let $C_{i_{k}}$, $k=1,2,\ldots$ be a subsequence of $(C_i)$, and let $t=(t_k)$,  $k=1,2,\ldots$ be a sequence of positive real numbers. Let $H_t$ be the 
hyperbolic metric obtained by a Fenchel-Nielsen multi-twist along $C_{i_k}$, of distance $t_i$ measured on $C_{i_k}$, for each $k$.
Then if $d_{qc}(H_0,H_t)< M$, we have
$$\sup_k|t_k| \leq C d_{qc}(H,H_t)$$
where $C$ is a positive constant depending on $L$ and $M$.
\end{theorem}
It follows from the above theorem that if $d_{qc}(H_0, H)$ is finite, than $t_k$ is uniformly bounded, which 
contradicts the fact that $t_k=[\log|\log \epsilon_k|]\to \infty$.

\end{proof}

\begin{remark} Shiga's examples of hyperbolic structures are not complete (as metric spaces) whereas in  our examples they are complete. To see this,
note that since the geodesic length of each curve in the pairs of pants decomposition that we use is bounded uniformly from above, it follows that  any closed ball of radius 1 on the surface is contained in a finite number of pairs of pants of the given decomposition, and therefore it is compact. Thus, by the theorem of Hopf-Rinow, the metric is complete (see \cite[Lemma 4.7]{ALPSS}).
\end{remark}

 \end{document}